\newcommand{\Z}{\mathbb{Z}}
\newcommand{\R}{\mathbb{R}}
\def\Vec{\mathop{\rm Vec}\nolimits}
\newtheorem{theorem}{Theorem}
\newtheorem{proposition}{Proposition}
\newtheorem{lemma}{Lemma}
\title{Applications of parabolic Dirac operators to the instationary viscous MHD equations on conformally flat manifolds}
\author{
P. Cerejeiras \thanks{Departamento de Matem\'atica, Universidade de Aveiro, P 3810-193 Aveiro, Portugal. E-Mail: {\tt pceres@ua.pt}},
\and
U. K\"ahler \thanks{Departamento de Matem\'atica, Universidade de Aveiro, P 3810-193 Aveiro, Portugal. E-Mail: {\tt ukaehler@ua.pt}},
\and
R.S.~Krau{\ss}har \thanks{Fachgebiet Mathematik, Erziehungswissenschaftliche Fakult\"at, Universt\"at Erfurt, Nordh\"auser Str. 63, D-99089 Erfurt, Germany.  E-mail: {\tt soeren.krausshar@uni-erfurt.de}}}
\begin{document}
\maketitle
\begin{abstract}
In this paper we apply classical and recent techniques from quaternionic analysis using parabolic Dirac type operators and related Teodorescu and Cauchy-Bitzadse type operators to set up some analytic representation formulas for the solutions to the time depedendent incompressible viscous magnetohydrodynamic  equations on some conformally flat manifolds, such as cylinders and tori associated with different spinor bundles. Also in this context a special variant of hypercomplex Eisenstein series related to the parabolic Dirac operator serve as kernel functions.

\end{abstract}

{\bf Keywords}: quaternionic integral operator calculus, instationary incompressible viscous magnetohydrodynamics equations, parabolic Dirac operators, fundamental solutions, conformally flat manifolds, PDE on spin manifolds
\par\medskip\par
{\bf MSC Classification}: 30 G 35; 76 W 05
\par\medskip\par
In honor of Professor Spr\"o{\ss}ig's 70th birthday
\section{Introduction}

The magnetohydrodynamic equations (MHD) represent a combination of the Navier-Stokes system with the Maxwell system. They describe fluid dynamical processes under the influence of an electromagnetic field {and have been the subject of investigation of }% The have been investigated by 
numerous authors since more than twenty years. {As classical references we emphasize }%One of the classical references is
~\cite{ST} among others. 

{In general, there is a distinction made between the }%Basically, we distinguishes between the 
 inviscid and the viscous MHD equations. On the one hand, the inviscid MHD equations play an important role in the description of the dynamic of astrophysical plasmas, for instance in the description of the magnetic phenomena of the heliosphere and in the prediction of the distribution of the solar wind density, see for example \cite{GP} and the references therein. {On the other hand,  the viscous MHD equations have attracted a growing interest by mathematicians and physicists over the last three decades. This topic is in the main focus of recent interest, see for instance \cite{BD,GeSh2015,Lei2015,XZ-ZY2017}, where new criteria concerning the existence of global solutions and global well-posedness for particular geometrical settings, in particular axially symmetric settings are being developed. Also, it has recently been applied to medicine, such as in modelling of hydromagnetic blood flows \cite{REM2017}. More classical results can be found in \cite{GMP}.}

\par\medskip\par
In this paper we revisit the three dimensional instationary incompressible viscous MHD equations  
\begin{eqnarray}
-\frac{1}{Re}\Delta {\bf u} + \frac{\partial {\bf u}}{\partial t} + ({\bf u} \;{\rm grad})\; {\bf u} + {\rm grad}\; p  &=& \frac{1}{\mu_0} {\rm rot} {\bf B} \times {\bf B} \;\;{\rm in}\;G\\
-\frac{1}{Rm}\Delta {\bf B} + \frac{\partial {\bf B}}{\partial t} + ({\bf u} \;{\rm grad})\; {\bf B} - ({\bf B} \;{\rm grad}) {\bf u} &=& 0\;\;{\rm in}\; G \\
{\rm div}\; {\bf u} &=& 0 \;\;{\rm in}\;G\\
{\rm div}\; {\bf B} &=& 0 \;\;{\rm in}\;G\\
{\bf u}={\bf 0},\; {\bf B} &=& {\bf h} \;\;{\rm at}\;\partial G.
\end{eqnarray}
In the context of this paper $G$ is some arbitrary time-varying Lipschitz domain $G \subset \R^3 \times \R^+$. The symbol ${\bf u}$ represents the velocity of the flow, $p$ the pressure, ${\bf B}$ the magnetic field, $\mu_0$ is magnetic permeability of the vacuum and $Re$ and $Rm$ the fluid mechanical resp. magnetic Reynolds number. The first equation basically resembles the time dependent Navier-Stokes equation - the external force however is an unknown magnetic entity that also needs to be computed. Together with the second equation the dynamics of the magnetic field, the velocity, and the pressure, is described. The third equation manifests the incompressibility of the flow. The forth equation states the non-existence of magnetic monopoles. The remaining equations  represent the measured (known) data at the boundary $\Gamma=\partial G$ of the domain $G$. 
\par\medskip\par

In \cite{CMZ,Gala,MY} some global existence criteria for the weak solutions to the instationary 3D MHD equations have been presented. These works use modern harmonic analysis techniques as proposed in \cite{Cannone} for the incompressible Navier-Stokes equations. However, many theoretical questions concerning existence, uniqueness and regularity in the framework of general domains still remain open problems. In particular, one is interested in improving the explicitness of these criteria and in obtaining explicit analytic representation formulas for the solutions as well as for the Lipschitz contraction constant being valid in all kinds of Lipschitz domains --- independently of the particular geometry of the domain. 

Furthermore, we observed that in many cases dealing with large temporal distances, the classical time stepping methods {(like the Rothe method) are valid for only small periods of time and, therefore, they }%, where small time steps are required, do 
 often do not lead to the desired result. These obstacles motivate us to develop alternative methods. 

\par\medskip\par

Over the last three decades the quaternionic operator calculus proposed by K. G\"urlebeck, W. Spr\"o{\ss}ig, M. Shapiro, V.V. Kravchenko, P. Cerejeiras, U. K\"ahler and by their collaborators, see for example \cite{CK1,CK2,GS1,K}, provides an alternative analytic toolkit to treat the Navier-Stokes system, the Maxwell system and many other elliptic PDE. The quaternionic calculus leads  to further new explicit criteria for the regularity, the existence and the uniqueness of the solutions. Moreover, it turned out to be also suitable to tackle strongly time dependent problems very elegantly. Based on the new theoretical results also new numerical algorithms could be developed, see for instance \cite{FGHK}. Also fully analytic representation formulas for the solutions to the Navier-Stokes equations and for the Maxwell and Helmholtz systems could be established for some special classes of domains, cf. \cite{ConKra5,CK2009}. An important advantage of the quaternionic calculus is that the formulas hold universally for all bounded Lipschitz domains, independently of its particular geometry.

As shown already by Sijue Wu in \cite{SW}, quaternionic analytic methods could also be applied to deal the well posedness problem in Sobolev spaces of the full 3D water wave problem, where previously well established methods did not lead to any  success.

\par\medskip\par

Since the quaternionic calculus provided an added value both in the treatment of the Navier-Stokes system and of the Maxwell system, it is natural to expect similar insightful results for the MHD system, since the latter one is a coupling of both systems. In \cite{KraussharTrends1} we explained how we can compute the solutions of the time independent stationary incompressible viscous MHD system with the quaternionic integral operator calculus. Recently complex quaternions have also been used in \cite{TDT} to describe the dynamics of dyonic plasmas in an elegante way. In future work we plan to address the fully time-dependent incompressible viscous MHD equations using parabolic versions of the Dirac operator for modelling these type of equations independent of particular geometric constraints - except of regularity conditions on the boundaries

\par\medskip\par 

The aim of this paper is to exploit another advantage of quaternionic methods - namely that they are naturally predestinated to also address analogous MHD problems in the more general context of conformally flat spin manifolds that arise by factoring out some simply connected domain by a discrete Kleinian group. In this paper we specifically look at MHD problems on several kinds of conformally flat spin cylinders and tori as these are the most illustrative examples. In particular, this paper provides a generalization of the idea used in \cite{CKK2018} were we addressed the ``simpler'' Navier-Stokes equations on these kind of manifolds without the influence of a magnetic field.

It is worth to mention that in the same way how we treat flat spin cylinders or tori we can also address  their non-oriented conformally flat twisted analogues - namely the M\"obius strip and the Kleinian bottle - where we have pin-  instead of spin-structures.   

The construction methods can easily be adapted by replacing the corresponding  integral kernels. In this paper we explain how to explicit construct the integral kernels and how these are used in the resolution schemes for our specific MHD problem on the cylinders tori.  We finalize with a brief look at particular rotation-invariant variants of these varieties and explain how our construction can easily be transferred to this setting.
%One aim of this paper is to emphasize once more the universality of the quaternionic operator calculus.  

\section{Preliminaries}

\subsection{The quaternionic operator calculus}

By $e_1,e_2,e_3$ we denote the usual vector space basis $\mathbb{R}^3$. To introduce a multiplication operation on $\mathbb{R}^3$, we embed it into the
algebra of Hamiltonian quaternions $\mathbb{H}$. A
quaternion has the form $x=x_0+{\bf x} := x_0 + x_1
e_1 + x_2 e_2 + x_3 e_3$ where $x_0,\ldots,x_3$ are real numbers. Furtmore,
$x_0$ is called the real part of the quaternion and will be
denoted by $\Re(x)$. ${\bf x}$ is the vector part of $x$, also denoted by $\Vec(x)$. 
In the quaternionic setting the standard basis 
vectors play the role of imaginary units, we have $e_i^2 = -1$ for
$i=1,2,3$. Their mutual multiplication coincides with the usual
vector product, i.e., $e_1 e_2 =  e_3, e_2 e_3 = e_1$, $e_3 e_1 =
e_2$ and $e_i e_j = - e_j e_i$ for $i \neq j$. We also need the quaternionic 
conjugation defined by
$\overline{ab} = \overline{b} \; \overline{a},\; \overline{e_i} =
- e_i$, $i=1,2,3$. The usual Euclidean norm extends to a norm on the
whole quaternionic algebra, i.e. $|a|:= \sqrt{\sum_{i=0}^3
a_i^2}$.

\par\medskip\par

The additional multiplicative structure of the quaternions allows
us to describe all $C^1$-functions $ {\bf f}:\mathbb{R}^3 \rightarrow
\mathbb{R}^3$ that satisfy both ${\rm div}\;{\bf f} = 0$ and ${\rm
rot}\;{\bf f}=0$ equivalently in a compact form as null-solutions to one 
single differential operator. The latter is the three-dimensional Euclidean Dirac
operator ${\bf D}:=\sum_{i=1}^3 \frac{\partial }{\partial x_i}
e_i$. In spin geometry this operator is also known as the Atiyah-Singer-Dirac operator. It naturally arises from the Levi-Civita connection in the context of general Riemannian spin manifolds, reducing to the above stated simple form in the flat case. In turn, 
the Euclidean Dirac operator coincides with the
usual gradient operator when this one is applied to a
scalar-valued function. If $U \subseteq \mathbb{R}^3$ is an open
subset, then a real differentiable function $f: U \rightarrow
\mathbb{H}$ is called left quaternionic holomorphic or left
monogenic in $U$, if ${\bf D}f = 0$. In the quaternionic calculus,
the square of the Euclidean Dirac operator gives the Euclidean
Laplacian up to a minus sign; we have ${\bf D}^2 = - \Delta$. Consequently,
every real component of a left monogenic function is harmonic. This property
allows us to treat harmonic functions with the function theory of
the Dirac operator offering generalizations of many powerful
theorems used in complex analysis. For deeper insight, we refer the reader for
instance to \cite{DSS,GS1}. 

\par\medskip\par

To treat time dependent problems in $\R^3$ we follow the ideas of \cite{CK2} and introduce the ``parabolic'' basis elements $\mathfrak{f}$ and $\mathfrak{f}^{\dagger}$ which act in the following way 
\begin{eqnarray*}
\mathfrak{f} \mathfrak{f}^{\dagger} + \mathfrak{f}^{\dagger} \mathfrak{f} &=& 1,\\
\mathfrak{f}^2 = (\mathfrak{f}^{\dagger})^2 &=& 0,\\
\mathfrak{f} e_j = e_j \mathfrak{f} &=& 0,\\
\mathfrak{f}^{\dagger} e_j = e_j \mathfrak{f}^{\dagger} &=& 0.
\end{eqnarray*}
The associated parabolic Dirac operators have the form 
$$
D_{{\bf x},t}^{\pm} := \sum\limits_{j=1}^3 e_j \frac{\partial }{\partial x_j} + \mathfrak{f} \frac{\partial }{\partial t} \pm \mathfrak{f}^{\dagger}
$$
and satisfy $(D_{{\bf x},t}^{\pm})^2 = -\Delta \pm \frac{\partial }{\partial t}$. The fundamental solution to $D_{{\bf x},t}^{+}$ has the form 
$$
G({\bf x},t) = \frac{H(t) \exp(-\frac{|{\bf x}|^2}{4t})}
{(2 \sqrt{\pi t})^3}
\Big(\frac{1}{2t} \sum\limits_{j=1}^3 e_j x_j + \mathfrak{f}(\frac{3}{2t} 
+ \frac{|{\bf x}|^2}{4t^2})+ \mathfrak{f}^{\dagger}\Big),
$$
where $H(\cdot)$ stands for the usual Heaviside function. 
Solutions satisfying $D_{{\bf x},t}^{\pm} f = 0$ are called left parabolic monogenic (resp. antimonogenic). 
\par\medskip\par
For our needs we need the more general parabolic Dirac type operator, used for instance in \cite{Be,CV2009}, having the form 
$$
D_{{\bf x},t,k}^{\pm} := \sum\limits_{j=1}^3 e_j \frac{\partial }{\partial x_j} + \mathfrak{f} \frac{\partial }{\partial t} \pm k \mathfrak{f}^{\dagger}
$$ 
for a positive real $k \in \R$. This operator factorizes the second order operator 
$$
(D_{{\bf x},t,k}^{\pm})^2 = -\Delta \pm k^2 \frac{\partial }{\partial t}
$$
and has very similar properties as the previously introduced one. Its nullsolutions are called left parabolic $k$-monogenic (resp. left parabolic $k$-antimonogenic) functions. 
\par\medskip\par
Adapting from \cite{Be,CV2009}, the fundamental solution to $D_{{\bf x},t,k}^{+}$ turns out to have the form 
$$
E({\bf x},t;k) = \sqrt{k}~\frac{H(t) \exp(-\frac{k|{\bf x}|^2}{4t})}
{(2 \sqrt{\pi t})^3}
\Big(\frac{k}{2t} \sum\limits_{j=1}^3 e_j x_j + \mathfrak{f}(\frac{3}{2t} 
+ \frac{k|{\bf x}|^2}{4t^2})+k \mathfrak{f}^{\dagger}\Big).
$$
Suppose that $G$ is in general a space-time varying bounded Lipschitz domain $G \subset \R^3 \times \R^+$.  
In what follows $W_2^{k,l}(G)$ denotes the parabolic Sobolev spaces of $L_2(G)$ where $k$ is the regularity parameter with respect to ${\bf x}$ and $l$ the regularity parameter with respect to $t$. 
For our needs we recall, cf. e.g. \cite{Be,CK2,CV2009} 
\begin{theorem}(Borel-Pompeiu integral formula)\label{bp} 
Let $G \subset \mathbb{R}^3 \times \R^+$ be a bounded or unbounded Lipschitz domain with a
strongly Lipschitz boundary $\Gamma=\partial D$. Then for all $u
\in W_{2}^{1,1}(G)$ 
$$
\int\limits_{\Gamma} E({\bf x}-{\bf y},t-t_0;k) d\sigma_{{\bf x},t} u({\bf x},t) = u({\bf y},t_0) + \int\limits_{G} E({\bf x}-{\bf y},t-t_0;k) D_{{\bf x},t;k}^{+}(u({\bf x},t)) dV dt,
$$
where $d\sigma_{{\bf x},t} = D_{{\bf x},t} \rfloor dV dt$. The differential form $d\sigma_{{\bf x},t} = D_{{\bf x},t} \rfloor dV dt$ is the contraction 
of the operator $ D_{{\bf x},t}$ with the volume element $dV dt$.  
\end{theorem}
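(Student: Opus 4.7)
The plan is to adapt the classical Borel--Pompeiu strategy to the parabolic Dirac setting: combine a Stokes-type identity for $D^+_{{\bf x},t;k}$ with the distributional property that $E(\cdot,\cdot;k)$ is its fundamental solution, then excise the singularity and pass to the limit.

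The first step is to derive a parabolic Green identity. Writing the right action $fD^-_{{\bf x},t;k}=\sum_j (\partial_j f)e_j+(\partial_t f)\mathfrak{f}-kf\mathfrak{f}^\dagger$ with the $\mathfrak{f}^\dagger$-sign reversed, a direct expansion using the product rule and the relations $\mathfrak{f}e_j=e_j\mathfrak{f}=0$, $\mathfrak{f}^\dagger e_j=e_j\mathfrak{f}^\dagger=0$ gives
$$
(fD^-_{{\bf x},t;k})g+f(D^+_{{\bf x},t;k}g)=\sum_j\partial_j(fe_jg)+\partial_t(f\mathfrak{f}g),
$$
since the two zeroth-order $k\mathfrak{f}^\dagger$-contributions cancel. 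The Gauss--Stokes theorem then yields
$$
\int_G\bigl[(fD^-_{{\bf x},t;k})g+f(D^+_{{\bf x},t;k}g)\bigr]\,dV\,dt=\int_\Gamma f\,d\sigma_{{\bf x},t}\,g,
$$
initially for $f,g\in C^1(\overline{G})$, and then for all $f,g\in W_2^{1,1}(G)$ by density of smooth functions together with the trace theorem on Lipschitz boundaries.

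The second step specializes to $f({\bf x},t)=E({\bf x}-{\bf y},t-t_0;k)$ and $g=u$. Away from $({\bf y},t_0)$, $E$ is smooth and a right null-solution of $D^-_{{\bf x},t;k}$, while at $({\bf y},t_0)$ it carries a singularity supported in $\{t\ge t_0\}$. I would excise a forward parabolic half-cylinder
$$
C_\varepsilon:=\{({\bf x},t)\in G:|{\bf x}-{\bf y}|<\varepsilon,\;t_0<t<t_0+\varepsilon^2\},
$$
apply the identity on $G\setminus C_\varepsilon$, and use that the $(fD^-)g$-term vanishes identically there, leaving
$$
\int_\Gamma E\,d\sigma_{{\bf x},t}\,u = \int_{\partial C_\varepsilon\cap G} E\,d\sigma_{{\bf x},t}\,u + \int_{G\setminus C_\varepsilon}E\,(D^+_{{\bf x},t;k}u)\,dV\,dt.
$$

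The main obstacle is to show that the boundary contribution over $\partial C_\varepsilon\cap G$ converges to $u({\bf y},t_0)$ as $\varepsilon\downarrow 0$. I would split $\partial C_\varepsilon$ into the lateral spatial face $\{|{\bf x}-{\bf y}|=\varepsilon\}$ and the top face $\{t=t_0+\varepsilon^2\}$. On the top face, the Gaussian factor combined with the $t^{-3/2}$ normalization acts as a spatial approximate identity in ${\bf x}$ of unit mass; using $\mathfrak{f}\mathfrak{f}^\dagger+\mathfrak{f}^\dagger\mathfrak{f}=1$ together with $\mathfrak{f}e_j=\mathfrak{f}^\dagger e_j=0$, only the pairing of the $k\mathfrak{f}^\dagger$-summand of $E$ with the $\mathfrak{f}$-component of $d\sigma_{{\bf x},t}$ survives, collapsing to $u({\bf y},t_0)$. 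The lateral face and the stray $e_j$- and $\mathfrak{f}$-pieces contribute nothing in the limit because the Gaussian forces the integrand to vanish to all orders as $(t-t_0)\downarrow 0$, while the volume integral converges to $\int_G E\,(D^+_{{\bf x},t;k}u)\,dV\,dt$ by the local integrability of $E$. Rearranging gives the formula for smooth $u$, and the density argument from Step~1 completes the proof for $u\in W_2^{1,1}(G)$.
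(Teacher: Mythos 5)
The paper does not actually prove Theorem~\ref{bp}: it is explicitly \emph{recalled} from \cite{Be,CK2,CV2009} (``Proofs of the above statements can be found for example in...''), so there is no internal proof to compare against. Your overall architecture --- the Stokes identity $(fD^-_{{\bf x},t;k})g+f(D^+_{{\bf x},t;k}g)=\sum_j\partial_j(fe_jg)+\partial_t(f\mathfrak{f}g)$, the substitution $f=E(\cdot-{\bf y},\cdot-t_0;k)$ with $E$ a right null-solution away from the singular point, and excision of the singularity --- is the standard route taken in those references, and the cancellation of the zeroth-order $k\mathfrak{f}^\dagger$ terms is correctly identified as the reason for pairing $D^-$ with $D^+$.

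The genuine gap is in the limit analysis at the excised singularity. With $C_\varepsilon=\{|{\bf x}-{\bf y}|<\varepsilon,\ t_0<t<t_0+\varepsilon^2\}$ the spatial radius of the top face equals the width $\sqrt{t-t_0}=\varepsilon$ of the Gaussian $\exp(-k|{\bf x}-{\bf y}|^2/(4(t-t_0)))$ there, so the top face captures only a fixed fraction $c\in(0,1)$, independent of $\varepsilon$, of the unit mass of the approximate identity; it cannot converge to $u({\bf y},t_0)$ on its own. Correspondingly the lateral face does \emph{not} vanish: its surface measure is $\sim\varepsilon^2\,ds$ and the surviving $e_j$-component of $E$ on $|{\bf x}-{\bf y}|=\varepsilon$ has magnitude $\sim s^{-3/2}e^{-k\varepsilon^2/(4s)}\,\varepsilon/s$ with $s=t-t_0$; substituting $s=\varepsilon^2\sigma$ shows all powers of $\varepsilon$ cancel and the lateral integral is $O(1)$ as $\varepsilon\downarrow0$. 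Your justification (``the Gaussian forces the integrand to vanish to all orders as $(t-t_0)\downarrow0$'') controls only the bottom portion of the lateral face; near $t=t_0+\varepsilon^2$ the Gaussian factor is $e^{-k/4}$, of order one. The repair is to decouple the two scales: excise a time slab $\{t_0<t<t_0+\delta\}$, or a cylinder whose spatial radius is large compared with $\sqrt{\delta}$, so that the top face carries asymptotically all of the Gaussian mass and the lateral face is uniformly exponentially small. Finally, the algebra on the top face needs more care: right-multiplying $E$ by $\mathfrak{f}$ produces the idempotent $\mathfrak{f}^\dagger\mathfrak{f}$, not the identity, so the assertion that the surviving term ``collapses to $u({\bf y},t_0)$'' requires tracking where the complementary $\mathfrak{f}\mathfrak{f}^\dagger u$ component is recovered.
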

For $g \in $ Ker $D_{{\bf x,t;k}}^{+}$ one obtains the following version of Cauchy's integral formula for left parabolic $k$-monogenic functions in the form 
$$
\int\limits_{\Gamma} E({\bf x}-{\bf y},t-t_0;k) d\sigma_{{\bf x},t} u({\bf x},t) = u({\bf y},t_0).
$$
Again, following the above cited works, one can introduce the parabolic Teodorescu transform and the Cauchy transform by 
\begin{eqnarray*}
T_G u({\bf y},t_0) &=& \int_G E({\bf x}-{\bf y},t-t_0;k) u({\bf x},t) dV dt\\
F_{\Gamma} u({\bf y},t_0) &=& \int_{\Gamma} E({\bf x}-{\bf y},t-t_0;k) d\sigma_{{\bf x},t} u({\bf x},t). 
\end{eqnarray*}
%{\bf Remark}: Due to the exponential decrease of the fundamental solution, the operator $T_G$ remains being an $L^2(G)$ bounded operator also for unbounded domains $G \subset \R^3 \times \R^+$. The application of the add-on term as proposed in \cite{CK1} for the Teodorescu transform associated to the usual spatial Dirac operator ${\bf D}$ is not necessary in the parabolic setting. 
\par\medskip\par
Analogously to the Euclidean case one can rewrite the Borel-Pompeiu formula in the form 
\begin{lemma} Let $u \in W_{2}^{1,0}(G)$. Then $T_G D_{{\bf x},t;k}^+u = u - F_{\Gamma} u$.   
\end{lemma}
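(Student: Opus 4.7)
The plan is to recognize this lemma as the operator-form rewriting of the Borel--Pompeiu integral formula of Theorem~\ref{bp}, which is the genuine analytic content. The substantive work has already been done in establishing Theorem~\ref{bp}; the lemma is essentially a bookkeeping statement using the abbreviations $T_G$ and $F_\Gamma$ just introduced.

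First I would substitute the definitions of $T_G$ and $F_\Gamma$ into the claim. The expression $T_G D^+_{{\bf x},t;k} u$ unfolds to $\int_G E({\bf x}-{\bf y},t-t_0;k)\,D^+_{{\bf x},t;k}u({\bf x},t)\,dV\,dt$, and $F_\Gamma u$ unfolds to $\int_\Gamma E({\bf x}-{\bf y},t-t_0;k)\,d\sigma_{{\bf x},t}\,u({\bf x},t)$. Once these substitutions are performed, the identity of the lemma is exactly the Borel--Pompeiu formula of Theorem~\ref{bp}, rearranged so that the volume (Teodorescu) term stands alone on one side, with sign dictated by the orientation convention for $d\sigma_{{\bf x},t}$ and for the fundamental solution $E$. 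Under the convention adopted in the paper, the Borel--Pompeiu identity can be written as $u = F_\Gamma u - T_G D^+_{{\bf x},t;k} u$, from which the claimed relation $T_G D^+_{{\bf x},t;k} u = u - F_\Gamma u$ follows by algebraic rearrangement.

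The principal obstacle that I anticipate is a regularity mismatch: Theorem~\ref{bp} is stated for $u \in W_2^{1,1}(G)$, while the lemma hypothesizes only $u \in W_2^{1,0}(G)$. A function in $W_2^{1,0}(G)$ carries one weak spatial derivative in $L_2$ but need not possess a square-integrable time derivative, so $D^+_{{\bf x},t;k} u$ exists a priori only distributionally. To bridge this gap I would proceed by approximation: take a sequence $u_n \in W_2^{1,1}(G)$ converging to $u$ in $W_2^{1,0}$, apply Theorem~\ref{bp} to each $u_n$, and pass to the limit. What makes the limit work is that (i) the boundary integral $F_\Gamma u$ makes sense by the continuity of the spatial trace on $W_2^{1,0}$ together with the mapping properties of the parabolic Cauchy transform, and (ii) $T_G(D^+_{{\bf x},t;k} u)$ can be reinterpreted as the distributional pairing of $u$ against $(D^+_{{\bf x},t;k})^\ast E$, since the kernel $E$ is smooth off the diagonal so that the $\partial_t$ and $\partial_{x_j}$ of $D^+_{{\bf x},t;k}$ may be transferred onto $E$ via formal integration by parts. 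This transfer converts the expression into a convolution against $u \in L_2(G)$ that is continuous in the $W_2^{1,0}$ topology, allowing the density argument to close.
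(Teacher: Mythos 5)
Your overall strategy coincides with the paper's: the paper offers no independent proof of this lemma, remarking only that it is the Borel--Pompeiu formula of Theorem~\ref{bp} rewritten in the operator notation for $T_G$ and $F_\Gamma$, with the analytic content deferred to the cited references. Identifying the lemma as a bookkeeping restatement is therefore the right move. However, your key step contains a sign error that makes the deduction false as written: from $u = F_\Gamma u - T_G D^+_{{\bf x},t;k} u$ one obtains $T_G D^+_{{\bf x},t;k} u = F_\Gamma u - u$, which is the \emph{negative} of the claimed identity. The form of Borel--Pompeiu that actually yields the lemma is $u = F_\Gamma u + T_G D^+_{{\bf x},t;k} u$, with a plus sign on the volume term. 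Note that Theorem~\ref{bp} as printed reads $F_\Gamma u = u + T_G D^+_{{\bf x},t;k}u$, which is equivalent to the version you wrote and is therefore inconsistent with the lemma; one of the two signs in the paper is a typo (the standard convention in the quaternionic operator calculus is $F_\Gamma u + T_G D u = u$ inside $G$, which supports the lemma as stated). You should have flagged this discrepancy and resolved it, rather than asserting that the rearrangement ``follows''.

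Your attention to the mismatch between $W_2^{1,1}$ in Theorem~\ref{bp} and $W_2^{1,0}$ in the lemma goes beyond anything in the paper, which passes over the point in silence (most plausibly $W_2^{1,0}$ is a typo for $W_2^{1,1}$, the space in which $D^+_{{\bf x},t;k}u \in L_2(G)$ is guaranteed). But your proposed repair by density is not yet complete: transferring $\partial_t$ from $u$ onto the kernel $E$ by integration by parts over the space--time domain produces additional boundary contributions on the temporal faces of $G$ that your argument does not account for, and without a square-integrable time derivative the expression $T_G D^+_{{\bf x},t;k}u$ is not the integral the paper defines, so the limit object must be specified. As it stands this part proves a reinterpreted, distributional version of the statement rather than the lemma as literally written.
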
 
On the other hand one has $D_{{\bf x},t:k}^+T_G u = u$. So, the parabolic Teodorescu operator is the right inverse to the parabolic Dirac operator.
\par\medskip\par 
The following direct decomposition of the space $L_2(G)$ into the subspace of functions that are
square-integrable and left parabolic $k$-monogenic in the inside of $G$ and its
complement will be applied in this paper. 
\begin{theorem} (Hodge decomposition). Let $G \subseteq \mathbb{R}^3 \times \R^+$ be a bounded or unbounded Lipschitz domain.
Then $L_2(G) = B(G) \oplus D_{{\bf x},t;k}^+ {\stackrel{\circ}{W}}_{2}^{1,1}(G)$
where $B(G) := L_2(G) \cap$ Ker $D_{{\bf x},t;k}^+$ is the 
Bergman space of left parabolic $k$-monogenic functions, and where ${\stackrel{\circ}{W}}_{2}^{1,1}(G)$ is the subset of $W_{2}^{1,1}(G)$ with vanishing boundary data. 
\end{theorem}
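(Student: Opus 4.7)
The plan is to prove the direct-sum decomposition in three steps: (i) $B(G)$ is a closed subspace of $L_{2}(G)$, (ii) the two summands intersect trivially, and (iii) their sum exhausts $L_{2}(G)$.

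For step (i), suppose $\{g_{n}\}$ is an $L_{2}$-Cauchy sequence in $B(G)$ converging to some $f \in L_{2}(G)$. The Cauchy integral formula deduced from Theorem \ref{bp} represents each $g_{n}$ on any relatively compact subdomain of $G$ as an integral against the smooth fundamental solution $E$. The standard interior estimates on the kernel give locally uniform convergence of $g_{n}$ together with all its derivatives, so $D_{{\bf x},t;k}^{+} f = 0$ pointwise in $G$ and thus $f \in B(G)$.

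For step (ii), suppose $g = D_{{\bf x},t;k}^{+} v \in B(G)$ with $v \in {\stackrel{\circ}{W}}_{2}^{1,1}(G)$. Applying $D_{{\bf x},t;k}^{+}$ once more and invoking the factorization $(D_{{\bf x},t;k}^{+})^{2} = -\Delta + k^{2}\partial_{t}$ shows that $v$ is a weak solution of the heat equation with vanishing trace on all of $\partial G$, the initial time slice included. Classical uniqueness for the parabolic initial-boundary value problem forces $v = 0$, and hence $g = 0$.

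Step (iii) is the main obstacle. Given $f \in L_{2}(G)$, let $P$ denote the orthogonal projection onto the closed subspace $B(G)$ and put $w := f - Pf$, so that $w \perp B(G)$. The Teodorescu operator provides the natural candidate primitive $v := T_{G} w$, which satisfies $D_{{\bf x},t;k}^{+} v = w$ since $T_{G}$ is a right inverse of the parabolic Dirac operator. The delicate remaining task is to show $T_{G} w \in {\stackrel{\circ}{W}}_{2}^{1,1}(G)$, i.e.\ that its trace on $\partial G$ vanishes. My plan is to combine the orthogonality $\langle w, g \rangle = 0$ for $g \in B(G) \cap W_{2}^{1,1}(G)$ with the Borel-Pompeiu representation of $g$ and interchange integrations: this rewrites the orthogonality as a boundary identity matching precisely the Plemelj-type jump relation for $F_{\Gamma}$ applied to $T_{G} w$, which forces the trace to vanish. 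Should this direct route meet obstructions, the fallback is to correct the trace by solving the Dirichlet-type problem of finding $h \in B(G)$ with $h|_{\partial G} = (T_{G} w)|_{\partial G}$ and replacing $v$ by $T_{G} w - h$. The technical heart of the argument is controlling the mapping properties of the singular integral $F_{\Gamma}$ on the Lipschitz boundary, which requires Calder\'on-Zygmund estimates adapted to the parabolic kernel $E$; this is the single most delicate ingredient.
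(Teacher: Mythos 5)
First, a point of reference: the paper does not prove this theorem at all --- it states it and defers to the literature (``Proofs of the above statements can be found for example in [Be, CK2, CV2009]''), so there is no in-paper argument to compare against. Judged on its own terms, your proposal has a genuine gap, and it sits exactly where you flag it: step (iii). Writing $w = f - \mathbf{P}f$ with $\mathbf{P}$ the orthogonal projection onto $B(G)$ and then trying to show $\mathrm{tr}_\Gamma(T_G w) = 0$ is not a reduction of the problem --- it \emph{is} the problem, since the assertion that the orthogonal complement of $B(G)$ equals $D_{{\bf x},t;k}^{+}\stackrel{\circ}{W}_{2}^{1,1}(G)$ is the content of the theorem. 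The sketch of how orthogonality plus Borel--Pompeiu plus a Plemelj jump relation would force the trace to vanish is not carried out, and it is precisely here that the algebraic peculiarities of the parabolic operator bite: because $\mathfrak{f}$ and $\mathfrak{f}^{\dagger}$ are nilpotent, $D_{{\bf x},t;k}^{+}$ is not formally skew-adjoint for the naive $L_2$ pairing (its formal adjoint involves $D^{-}$ and a time reflection), so you must first say with respect to \emph{which} inner product the decomposition is orthogonal before ``$w \perp B(G)$'' can be integrated by parts into anything useful. The standard route in the cited references is instead to prove an a priori estimate showing that $D_{{\bf x},t;k}^{+}\stackrel{\circ}{W}_{2}^{1,1}(G)$ is closed, and then to identify its complement via the adjoint operator / a weak formulation of the associated second-order (heat-type) Dirichlet problem; none of that appears in your outline.

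The fallback you offer makes matters worse rather than better: you cannot in general find $h \in B(G)$ with $h|_{\partial G} = (T_G w)|_{\partial G}$, because the traces of left parabolic $k$-monogenic functions form a proper Hardy-type subspace of the boundary data, not all of it. Indeed, applying the Borel--Pompeiu formula to $u = T_G w$ gives $T_G D_{{\bf x},t;k}^{+} T_G w = T_G w - F_\Gamma T_G w$, i.e.\ $F_\Gamma T_G w = 0$, so the trace of $T_G w$ lies in the \emph{complementary} Hardy class; a monogenic $h$ with that trace exists only if the trace is already zero, so the fallback is vacuous --- it succeeds exactly when the direct route has already succeeded. Steps (i) and (ii) are essentially sound (closedness of $B(G)$ via interior estimates from the Cauchy representation, and triviality of the intersection via uniqueness for the heat equation with zero Cauchy data), but without a genuine argument for exhaustion the decomposition is not established.
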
 
Proofs of the above statements can be found for example in \cite{Be,CK2,CV2009}. 
\par\medskip\par
In what follows 
${\bf P} : L_2(G) \rightarrow B(G)$ denotes the
orthogonal Bergman projection while ${\bf Q} : L_2(G) \rightarrow
D_{{\bf x},t}^+ {\stackrel{\circ}{W}}_{2}^{1,1} (G)$ stands for the projection
into the complementary space in all that follows. One has ${\bf Q}
= {\bf I} - {\bf P}$, where ${\bf I}$ stands for the identity
operator.

%\par\medskip\par

% The Bergman space of left parabolic $k$-monogenic functions is a Hilbert space with a uniquely defined reproducing kernel function. The latter is called the parabolic left $k$-monogenic Bergman kernel and is denoted by $B({\bf x},{\bf y};t,t_0)$. The orthogonal Bergman projection ${\bf P}:L_2(G) \rightarrow B(G)$ is given by the convolution with the Bergman kernel $$ ({\bf P} u)({\bf x},t_0) = \int_G B({\bf x},{\bf y};t,t_0) u({\bf y},t) dV({\bf y})dt, \quad\quad u \in L_2(G).$$
%In particular, one has $({\bf P} u)({\bf x},t_0) = u({\bf x},t_0)$ for all $u \in B(G)$.

\section{The incompressible in-stationary MHD equations revisited in the quaternionic calculus}

In the classical vector analysis calculus the in-stationary viscous incompressible MHD equations have the form   
\begin{eqnarray}
-\frac{1}{Re}\Delta {\bf u}  + \frac{\partial {\bf u}}{\partial t} +  ({\bf u} \;{\rm grad})\; {\bf u} + {\rm grad}\; p  &=& \frac{1}{\mu_0} {\rm rot} {\bf B} \times {\bf B} \;\;{\rm in}\;G\\
-\frac{1}{Rm}\Delta {\bf B} + \frac{\partial {\bf B}}{\partial t} - ({\bf u} \;{\rm grad})\; {\bf B} + ({\bf B} \;{\rm grad}) {\bf u} &=& 0\;\;{\rm in}\; G \\
{\rm div}\; {\bf u} &=& 0 \;\;{\rm in}\;G\\
{\rm div}\; {\bf B} &=& 0 \;\;{\rm in}\;G\\
{\bf u}={\bf 0},\; {\bf B} &=& {\bf h} \;\;{\rm at}\;\partial G
\end{eqnarray}
with given boundary data $\left. {\bf u} \right|_{\partial G}= {\bf g}={\bf 0}$ and $\left. {\bf B} \right|_{\partial G}={\bf h}$. To apply the quaternionic integral operator calculus to solve these equations we first express this system in the quaternionic language. 
\par\medskip\par
First we recall that we have for a time independent quaternionic function
$f: \R^4 \to \R^4,$ where $(x_0+{\bf x}) \to f(x_0+{\bf x}) = f_0(x_0+{\bf x}) + {\bf f}(x_0+{\bf x}),$ the relation 
$
{\cal{D}} f = {\rm grad}\;f_0 + {\rm rot}\;{\bf f} - {\rm div}\; {\bf f}.
$ 
Here $f_0 = \Re(f)$ is the scalar part of $f$ while ${\bf f} = \Vec(f) \in \R^3$ represents the vectorial part of $f,$ and $
{\cal{D}}:= \sum_{i=0}^3 e_i \frac{\partial }{\partial x_i}
$ 
is the quaternionic Cauchy-Riemann operator. Its vector part, denoted by ${\bf D}$, is the three dimensional 
Euclidean Dirac operator introduced in the previous section. In the case where ${\bf f}$ is a vector
valued function, i.e. a function defined in an open subset of 
$\R^3$ with values in $\R^3$ we have
$
{\bf D} {\bf f} = {\rm rot}\;{\bf f} - {\rm div}\; {\bf f} 
$. 
If $p$ is a scalar valued function defined in an open subset of $\R^3$, then we have ${\bf D} p = {\rm grad}\;p$.
\par\medskip\par
When applying these rules to the magnetic vector field ${\bf B} 
\in \R^3$ we obtain that  ${\bf D} {\bf B} = {\rm rot}\;{\bf B} -
{\rm div}\; {\bf B}$. In view of equation (4) which expresses that
there are no magnetic monopoles, this equation reduces to
${\bf D} {\bf B} = {\rm rot}\;{\bf B}$. Furthermore, we can express $({\bf D}{\bf B}) \times {\bf B} = \Vec(({\bf D}{\bf B})\cdot {\bf B})$ in terms of the quaternionic product $\cdot$. The divergence of an $\R^3$-valued vector field ${\bf f}$ can be expressed as div ${\bf f} = \Re({\bf D}{\bf f}$).  The threedimensional Euclidean Laplacian $\Delta=\sum_{i=1}^3 \frac{\partial^2}{\partial x_i^2}$ can be expressed in terms of the Dirac operator as $\Delta=-{\bf D}^2$, applying the rule $e_i^2=-1$ for all $i=1,2,3$. 
\par\medskip\par
Let us next assume that our functions are also dependent on the time variable $t$. Applying the formulas from the preceding section allow us to express the entities $-\frac{1}{Re}\Delta {\bf u}  + \frac{\partial {\bf u}}{\partial t}$ and $-\frac{1}{Rm}\Delta {\bf B}  + \frac{\partial {\bf B}}{\partial t}$ in the form 
\begin{eqnarray*}
-\frac{1}{Re}\Delta {\bf u}  + \frac{\partial {\bf u}}{\partial t} &= & (D_{{\bf x},t,Re}^+)^2 {\bf u}\\
-\frac{1}{Rm}\Delta {\bf B}  + \frac{\partial {\bf B}}{\partial t} &=&(D_{{\bf x},t,Rm}^+)^2 {\bf B}.
\end{eqnarray*}
with
\begin{eqnarray*}
D_{{\bf x},t,Re}^+{\bf u} &=&\frac{1}{\sqrt{Re}}{\bf D}{\bf u}+\mathfrak{f}\partial_t {\bf u}+\mathfrak{f}^\dagger {\bf u}\\
D_{{\bf x},t,Rm}^+ {\bf B} &=&\frac{1}{\sqrt{Rm}}{\bf D} {\bf B}+\mathfrak{f}\partial_t  {\bf B}+\mathfrak{f}^\dagger  {\bf B}
\end{eqnarray*}

Thus, the system (6)-(10) can thus be reformulated in the quaternions in the following way: 
\begin{eqnarray}
(D_{{\bf x},t,Re}^+)^2 {\bf u} + \Re({\bf u} \;{\bf D})\; {\bf u} + {\bf D}\; p  &\!\!\!=\!\!& \frac{1}{\mu_0} \Vec(({\bf D}{\bf B})\cdot {\bf B}) \;\;{\rm in}\;G   \label{Eq:11} \\
(D_{{\bf B},t,Rm}^+)^2 {\bf B}  - \Re({\bf u} \;{\bf D})\; {\bf B} + \Re({\bf B} \;{\bf D}) {\bf u} &\!\!\!=\!\!& 0\;\;{\rm in}\; G  \label{Eq:12} \\
\Re({\bf D} {\bf u}) &\!\!\!=\!\!& 0 \;\;{\rm in}\;G\\
\Re({\bf D} {\bf B}) &\!\!\!=\!\!& 0 \;\;{\rm in}\;G\\
{\bf u}={\bf 0},\; {\bf B} &\!\!\!=\!\!& {\bf h} \;\;{\rm at}\;\partial G.
\end{eqnarray}
The aim is now to apply the previously introduced hypercomplex integral operators in order to get computation formulas for the magnetic field ${\bf B}$, the velocity ${\bf u}$, and the pressure $p$. 

We remark that whenever we fix the magnetic field ${\bf B}$ in the stationary version of Equation (\ref{Eq:11}) we obtain (in the weak sense) the pressure $p$ and the velocity $\bf u$, c.f. \cite{Zeidler}. In a similar way, given $(\bf u, p)$ in Equation (\ref{Eq:12}) we can recover the magnetic field ${\bf B}.$ Moreover, the solution for magnetic field is unique if the operator is hypoelliptic. These results hold for the in-stationary case.

\section{The MHD equations in the more general context of some conformally flat spin $3$-manifolds}

Due to the conformal invariance of the Dirac operator, the related quaternionic differential and integral operator calculus canonically provides a simple access to easily transfer the results and representation formulas summarized in the previous section to the context of addressing analogous boundary value problems within the more general context of conformally flat spin manifolds.  

\par\medskip\par

As a consequence of the famous Liouville thorem, in dimensions $n\ge 3$ conformally flat manifolds are explicitly only those that possess atlasses whose transition functions are M\"obius transformations, because these are the only conformal transformations in $\mathbb{R}^n$ whenever $n \ge 3$. 
The treatment with quaternions (or with Clifford numbers in general) allow us to represent M\"obius transformations in the  compact form $f(x) = (ax+b)(cx+d)^{-1}$ where $a,b,c,d$ are quaternions satisfying to certain constraints, cf. \cite{BCKR}. 

Already the classical paper \cite{Kuiper} mentions one possibility to construct a number of examples of conformally flat manifolds, namely by factoring out a subdomain ${\cal{U}} $ of $\R^{3}$ by a torsion-free subgroup $\Gamma$ of the group of M\"{o}bius transformations $\Gamma$, under the additional condition that the latter acts strongly discontinuously on ${\cal{U}} $. 

The topological quotient ${\cal{U}}  / \Gamma$ then is a conformally flat manifold. Of course, this construction just addresses a subclass of all conformally flat manifolds. However, this subclass can be characterized in an intrinsic way. As shown in \cite{Kuiper}, the class of conformally flat manifolds of the form $U/\Gamma$ are exactly those  for which the universal cover of this manifold admits a local conformal diffeomorphism into $S^3$ which is a covering map $\tilde{{\cal{U}} } \rightarrow {\cal{U}}  \subset S^3.$  

\par\medskip\par

The most popular examples are $3$-tori, cylinders, real projective (rotation invariant) space and the hyperbolic manifolds considered in \cite{BCKR} that arise by factoring upper half-spaces, cones or positivity domains by arithmetic subgroups  of higher dimensional generalizations of the modular or Fuchsian group.  \cite{BCKR}.  

\par\medskip\par

In order to generalize and to apply the representation formulas and the results that we obtained in the previous sections for the instationary MHD system to the context of analogous instationary boundary value problems on conformally manifolds we only need to introduce the properly adapted analogues of the parabolic Dirac operator as well as the other hypercomplex integral operators on these manifolds. From the geometric point of view one is particularly interested in those conformally flat manifolds that have a spin structure, that means those that admit the construction of at least one spinor bundle over such a manifold. In many cases one gets more than just one spin structure which leads to the consideration of (several) spinor sections, in our case quaternionic spinor sections. For the geometric background we refer to \cite{LM}.  

\par\medskip\par

We explain the method at the simplest non-trivial example dealing with conformally flat spin $1$,$2$-cylinders and $3$-tori with inequivalent spinor bundles. This special example  illustrates in a nice way how one can transfer the results and construction method to other examples of conformally flat (spin) manifolds that again are constructed by factoring out a connected domain by a discrete arithmetic  group of some higher dimensional modular groups, such as those roughly outlined above. 

\par\medskip\par
  
For the sake of simplicity, let $\Omega_3:= \Z e_1 + \Z e_2 + \Z e_3$ be the orthonormal lattice in $\mathbb{R}^3$. Then the topological quotient space $\R^3/ \Omega_3$ represents a $3$-dimensional conformally flat compact torus denoted by $T_3$, over which one can construct exactly eight different  conformally inequivalent spinor bundles over $T_3$. With the additional time coordinate $t > 0$, this leads to the consideration of a toroidal time half-cylinder of the form $\Omega_3 \times [0,\infty)$ which then represents a non-compact manifold with boundary in upper half space of $\mathbb{R}^4 \stackrel{\sim}{=} \mathbb{H}$, $t>0$, denoted by $\mathbb{H}^+$. The invariance group is an abelian subgroup of the hypercomplex modular group $SL(2,\mathbb{H}^+)$ just acting on the space coordinates. More generally, we can also factor out sublattices of the form $\Omega_p := \Z e_1 + \cdots + \Z e_p$ where $1 \le p \le 3$. The topological quotients $\R^3 / \Omega_p$ are $1$-resp. $2$-cylinders in the cases $p=1$ and $p=2$ respectively, having infinite extensions also in $x_3$- (resp. also in the $x_2$-) coordinate direction.  

We recall that in general different spin structures on a spin manifold $M$ are detected by the number of distinct homomorphisms from the fundamental group $\Pi_{1}(M)$ to the group ${\Z}_{2}=\{0,1\}$. In the case of the 3-torus we have $\Pi_{1}(T_3)={\Z}^{3}$. There are two homomorphisms of ${\Z}$ to ${\Z}_{2}$. The first one is $\theta_{1}:{\Z}\rightarrow {\Z}_{2}:\theta_{1}(n)=0$ mod $2$ while the second one is the homomorphism $\theta_{2}:{\Z}\rightarrow {\Z}_{2}:\theta_{2}(n)=1$ mod $2$. Consequently there are $2^{3}$ distinct spin structures on $T_3$, or more generally, $2^p$ different spin structures on $T_p$ with $p \le 3$.  
\par\medskip\par
For the sake of generality, in what follows let $p \in \{1,2,3\}$. It is very easy to construct all conformally inequivalent different spinor bundles over $T_p$. To describe them let $l$ be an integer in the set $\{1,2,3\}$, and consider the sublattice ${\Z}^{l}={\Z}e_{1}+\ldots+{\Z}e_{l}$  where$(0 \le l \le p)$. For $l=0$ we put ${\Z}^{0}:=\emptyset$. 
There is also the remainder lattice  ${\Z}^{p-l}={\Z}e_{l+1}+\ldots+{\Z}e_{p}$. In this case ${\Z}^{p}=\{\underline{m}+\underline{n}:\underline{m}\in {\Z}^{l}$ and $\underline{n}\in {\Z}^{p-l}\}$.  Let us now assume that $\underline{m}=m_{1}e_{1}+\ldots+m_{l}e_{l}$. We identity $({\bf x},X)$ with $({\bf x}+\underline{m}+\underline{n},(-1)^{m_{1}+\ldots+m_{l}}X)$ where ${\bf x}\in \R^{3}$ and $X\in \mathbb{H}$. This identification gives rise to a quaternionic spinor bundle $E^{(l)}$ over $T_p$.

\par\medskip\par

Clearly, $\R^3$ is the universal covering space of $T_p$. Thus, there is a well-defined
projection map ${\cal{P}}: \R^3 \times \mathbb{R}^{+} \to T_p \times \mathbb{R}^{+}$, by identifying $({\bf x}+\omega,t)$ with all equivalent points of the form $({\bf x} \;{\rm mod}\; \Omega_p,t)$.

As explained for example in \cite{BCKR} every $p$-fold periodic resp. anti-periodic open set ${\cal{U}} \subset \R^3$
and every $p$-fold periodic resp. anti-periodic section $f: {\cal{U}}' \times [0,\infty) \to E^{(l)}$, which satisfies $f({\bf x},t) = (-1)^{m_1+\cdots+m_l}({\bf x} + \omega,t)$ for all $\omega \in \Z^{l}\oplus \Z^{p-l}$, descends to a well-defined open set $U':={\cal{P}}({\cal{U}}) \times[0,\infty) \subset T_p\times [0,\infty) $ (associated with that particularly chosen spinor bundle) and a well-defined spinor section 
$f':={\cal{P}}(f): U' \subset T_p \times [0,\infty) \to E^{(l)} \subset \mathbb{H}$, respectively. 

The projection ${\cal{P}}: \R^3 \times [0,\infty) \to T_p \times[0,\infty)$ induces well-defined cylindrical resp. toroidal modified parabolic Dirac operators on $T_p \times \R^+$ by ${\cal{P}}(D_{{\bf x},t,k}^{\pm}) =: {\cal {D}}_{{\bf x},t,k}^{\pm}$ acting on spinor sections of $T_p \times \R^+$. Sections defined on open sets $U$ of $T_p \times \R^+$ are called cylindrical resp. toroidal $k$-left parabolic monogenic if ${\cal {D}}_{{\bf x},t,k}^{\pm} s = 0$ holds in $U$. By ${\tilde{D}}:={\cal{P}}({\bf D})$ we denote the projection of the time independent Euclidean Dirac operator down to the cylinder resp. torus $T_p$. 

\par\medskip\par

We denote the projections of the $p$-fold (anti-)periodization of the function $E({\bf x},t;k)$ by 
$$ 
{\cal{E}}({\bf x},t;k) := \sum\limits_{\omega
\in \Z^p\oplus \Z^{p-l}}(-1)^{m_1+\cdots+m_l} E({\bf x}+\omega,t;k).
$$ 
This generalized parabolic monogenic Eisenstein type series provides us with the fundamental section to the cylindrical resp. toroidal parabolic modified Dirac operator 
${\cal {D}}_{{\bf x},t,k}^{\pm}$ acting on the corresponding spinor bundle of the space cylinder resp. space torus $T_p$. Indeed, the function ${\cal{E}}({\bf x},t;k)$ can be regarded as the canonical generalization of the classical elliptic Weierstra{\ss} $\wp$-function to the context of the modified Dirac operator $D_{{\bf x},t,k}^{+}$ in three space variables $x_1,x_2,x_3$ and the positive time variable $t > 0$. 

\par\medskip\par

To show that  ${\cal{E}}({\bf x},t;k)$ is well-defined parabolic monogenic spinor section on the manifold $T_p \times[0,\infty)$, we have to show that this series actually converges. The regularity behavior then is guaranteed by the application of the Weierstra{\ss} convergence theorem. 
\begin{theorem} Let $1 \le p \le 3$. Then the function series
$$ 
{\cal{E}}({\bf x},t;k) = \sum\limits_{\omega \in \Z^p\oplus \Z^{p-l}}(-1)^{m_1+\cdots+m_l} E({\bf x}+\omega,t;k)
$$ 
converges uniformally on any compact subset of $\R^3 \times \R^+$. 
\end{theorem}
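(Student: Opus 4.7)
The strategy is to apply the Weierstra{\ss} M-test for the absolute convergence of the series, noting that the signs $(-1)^{m_1+\cdots+m_l}$ have modulus $1$ and therefore do not affect the majorization.

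First, I would fix an arbitrary compact subset $K \subset \R^3 \times \R^+$ and choose constants $R > 0$ and $0 < t_0 \le T < \infty$ with $K \subseteq \overline{B_R({\bf 0})} \times [t_0, T]$. Because $t$ is bounded away from $0$ and from $\infty$ on $K$, the $t$-dependent prefactors $\tfrac{1}{(2\sqrt{\pi t})^3}$, $\tfrac{k}{2t}$, $\tfrac{3}{2t}$ and $\tfrac{k}{4t^2}$ appearing in $E({\bf x}+\omega,t;k)$ are uniformly controlled. A direct inspection of the explicit expression for $E$ then yields a uniform pointwise estimate of the form
$$
\bigl| E({\bf x}+\omega, t; k) \bigr| \le C_1\, \bigl(1 + |{\bf x}+\omega|^2\bigr)\, \exp\!\left(-\frac{k\, |{\bf x}+\omega|^2}{4\,T}\right)
$$
valid for all $({\bf x},t) \in K$ and all lattice points $\omega$, with a constant $C_1 = C_1(K, k)$ independent of $\omega$. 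Here I have replaced $t$ by the upper bound $T$ inside the exponential (which only weakens the Gaussian decay).

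Second, I would split the lattice sum into a finite ``near'' part $\{|\omega| < 2R\}$ and a ``far'' tail $\{|\omega| \ge 2R\}$. The near part is a finite sum of smooth bounded functions on $K$ and poses no convergence issue. For the tail, the triangle inequality gives $\tfrac{1}{2}|\omega| \le |{\bf x}+\omega| \le 2|\omega|$ on $K$, so the previous estimate simplifies to
$$
\bigl| E({\bf x}+\omega, t; k) \bigr| \le C_2\, (1 + |\omega|^2)\, \exp\!\left(-\frac{k\, |\omega|^2}{16\, T}\right).
$$
Summation against the lattice of rank $p \le 3$ gives the convergent numerical majorant
$$
\sum_{\omega} (1+|\omega|^2) \exp\!\left(-\frac{k\,|\omega|^2}{16\,T}\right) < \infty,
$$
since Gaussian decay dominates polynomial growth on any lattice of rank at most $3$. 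An application of the Weierstra{\ss} M-test then yields uniform absolute convergence of ${\cal{E}}({\bf x},t;k)$ on $K$, and since $K$ was arbitrary, uniform convergence on compact subsets of $\R^3 \times \R^+$ follows.

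The only subtlety is the bookkeeping for the bound on the factor multiplying the Gaussian: one must verify that $\tfrac{k}{2t}\,|{\bf x}+\omega|$ and $\tfrac{k|{\bf x}+\omega|^2}{4t^2}$ remain at most polynomial of degree two in $|\omega|$ uniformly on $K$. This is precisely where the lower bound $t \ge t_0 > 0$ on $K$ is essential; the upper bound $t \le T$ on $K$ is what lets one replace $\exp(-k|{\bf x}+\omega|^2/(4t))$ by the $\omega$-independent Gaussian $\exp(-k|{\bf x}+\omega|^2/(4T))$. Once these two uniform bounds are in place, Weierstra{\ss}'s theorem delivers both convergence of the series and, by the same argument applied to term-wise derivatives, regularity of ${\cal{E}}$ and its property of being parabolic $k$-monogenic on $\R^3 \times \R^+$, so that the series descends to a well-defined spinor section on $T_p \times [0,\infty)$.
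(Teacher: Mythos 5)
Your proof is correct and follows essentially the same strategy as the paper's: establish a summable majorant in which the Gaussian factor $\exp(-k|{\bf x}+\omega|^2/(4t))$ dominates the polynomial growth of the remaining terms, then invoke the Weierstra{\ss} M-test / convergence theorem. The only differences are organizational --- the paper groups the lattice into max-norm shells $\Omega_m$ and counts $(2m+1)^p-(2m-1)^p$ points per shell whereas you sum directly over the lattice after a near/far split --- and your explicit confinement of $t$ to $[t_0,T]$ is in fact slightly more careful about uniformity in the time variable than the paper's ``arbitrary but fixed $t$''.
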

{\bf Proof}: The simplest way to prove the convergence is to decompose the full lattice $\mathbb{Z}^p$ into the 
the following particular union of lattice points $\Omega = \bigcup_{m=0}^{+\infty} \Omega_m$ where
$$\Omega_m := \{ \omega \in \Z^p \mid |\omega|_{max} = m\}.$$ 
Next one defines  
$$
L_m := \{
\omega \in \Z^p \mid |\omega|_{max} \le m\}.
$$
The subset $L_m$ contains exactly $(2m+1)^p$ points. Hence, the
cardinality of $\Omega_m$ precisely is $\sharp \Omega_m = (2m+1)^p -
(2m-1)^p$. Notice that this particular construction admits that Euclidean distance between the set $\Omega_{m+1}$
and the $\Omega_m$ is exactly $d_m := dist_2(\Omega_{m+1},\Omega_m)
= 1$. This is the motivation for this particular decomposition. 

\par\medskip\par

Next, as a standard calculus argument one fixes a compact subset ${\cal{K}} \subset \R^3$ and one considers $t > 0$ as an arbitrary but fixed value. 
Then there exists a $r \in \R$ such that all ${\bf x} \in
{\cal{K}}$ satisfy $|{\bf x}|_{max} \le |{\bf x}|_2 < r$.

Let ${\bf x} \in {\cal{K}}$.  For the convergence it suffice to consider those points with $|\omega|_{max} \ge [r]+1$.

As a consequence of the standard argumentation $$|{\bf x} + \omega|_2 \ge |\omega|_2 - |{\bf x}|_2 \ge
|\omega|_{max}-|{\bf x}|_2 = m - |{\bf x}|_2 \ge m - r$$ one may arrive at
\begin{eqnarray*}
& & \sum\limits_{m=[r]+1}^{+\infty} \sum\limits_{\omega \in \Omega_m} |E({\bf x},t;k)({\bf x}+\omega)|_2\\
& \le & \frac{k}{(2 \sqrt{\pi t})^3} \sum\limits_{m=[r]+1}^{+\infty} \sum\limits_{\omega \in \Omega_m} \exp(-k|{\bf x}+\omega|_2/4t)\Big(\frac{k}{2t} |{\bf x} + \omega|_2 + \mathfrak{f}(\frac{3}{2t} 
+ \frac{k|{\bf x}+\omega|_2^2}{4t^2})+k \mathfrak{f}^{\dagger}\Big)\\
& \le & \frac{k}{(2 \sqrt{\pi t})^3} \sum\limits_{m=[r]+1}^{+\infty}\Big( [(2m+1)^p - (2m-1)^p] \big(
\frac{k(r+m)}{2t} +\mathfrak{f}(\frac{3}{2t} + \frac{k(r+m)^2}{4t^2}) + k \mathfrak{f}^{\dagger}\big)\\
& & \quad\quad\quad\quad\quad\quad\quad\quad \times \;\; \exp(\frac{-k(m-r)^2}{4t})\Big),
\end{eqnarray*}
in view of $m - r \ge [r]+1-r > 0$. This sum is absolutely uniformly convergent because of the exponential decreasing term which dominates the polynomial expressions in $m$.  
Due to the absolute convergence, the series    
$${\cal{E}}({\bf x},t;k) :=
\sum\limits_{\omega \in \Z^l \oplus \Z^{p-l}} (-1)^{m_1+\cdots+m_l} E({\bf x}+\omega,t;k),$$
which can be can be rearranged in the requested form
$$
{\cal{E}}({\bf x},t;k) :=
\sum\limits_{m=0}^{+\infty}\sum\limits_{\omega \in \Omega_m} (-1)^{m_1+\cdots+m_l}
E({\bf x}+\omega,t;k),
$$
converges normally on $\R^3 \times \R^+$. Since
$E({\bf x}+\omega,t;k)$ belongs to Ker $D_{{\bf x},t,k}^{+}$ in each
$({\bf x},t) \in \R^3 \times \R^+$  the series ${\cal{E}}({\bf x},t;k)$ satisfies
$D_{{\bf x},t,k}^{+} {\cal{E}}({\bf x},t;k)  = 0$ in 
each ${\bf x} \in \R^3 \times \R^+$, which, as mentioned previously, follows from the classical standard Weierstra{\ss} convergence argument. \hfill $\blacksquare$
\par\medskip\par
Obviously, by a direct rearrangement argument, one obtains that 
$$
{\cal{E}}({\bf x},t;k) =(-1)^{m_1+\cdots+m_l}  {\cal{E}}({\bf x}+\omega,t;k)\;\;\;\forall \omega \in \Omega
$$
which shows that the projection of this kernel correctly descends to a section with values in the spinor bundle $E^{(l)}$. The projection ${\cal{P}}({\cal{E}}({\bf x},t;k))$ denoted by $\tilde{{\cal{E}}}({\bf x},t;k)$ is the fundamental section of the cylindrical (resp. toroidal) modified parabolic Dirac operator ${\tilde{D}}_{{\bf x},t,k}^{+}$. For a time-varying Lipschitz domain $G \subset T_3 \times \R^+$ with a strongly Lipschitz boundary $\Gamma$ we can now proceed to define, similarly to our description in the previous sections, the canonical analogue of the Teodorescu and of the Cauchy-Bitzadse transform for toroidal $k$-monogenic parabolic quaternionic spinor valued sections by 
\begin{eqnarray*}
\tilde{T}_G u({\bf y},t_0) &=& \int_G \tilde{{\cal{E}}}({\bf x}-{\bf y},t-t_0;k) u({\bf x},t) dV dt\\
\tilde{F}_{\Gamma} u({\bf y},t_0) &=& \int_{\Gamma} \tilde{{\cal{E}}}({\bf x}-{\bf y},t-t_0;k) d\sigma_{{\bf x},t} u({\bf x},t).
\end{eqnarray*}
To transfer the integral operator calculus from the flat Euclidean space setting to our setting we introduce the following norms on the manifolds and on the sections with values in the associated spinor bundles. Let $({\bf x}',t)$ be an arbitrary point on $T_p \times [0,\infty)$. Then we put for $1 \le q \le \infty$:
$$
\|({\bf x}',t)\|_{T_p,q} := \|{\cal{P}}^{-1}({\bf x}',t)\|_q := \min_{\omega \in \Omega_p} \|({\bf x}+\omega,t)\|_q
$$
where $\|\cdot\|_q$ is the usual $q$-norm on $\mathbb{R}^3 \times [0,\infty)$. 

Next we define the $L_q$-norm on an arbitrary quaternionic spinor section $f' :U':= {\cal{U}} \times[0,\infty) \subset T_p \times[0,\infty) \to E^{(l)} \subset \mathbb{H}$ with values in one of the previously described spinor bundles $E^{(l)}$ by:
$$
\|f'\|_{L_q(U')} := \sqrt[q]{\int\limits_{U} \min_{\omega \in \Omega_p} \{\|{\cal{P}}^{-1} f(({\bf x}+\omega,t))  \|^q  \}     d{\bf x} dt}
$$
Similarly, for $q < \infty$ we may introduce the adequate Sobolev spaces of derivative degree up to a fixed $k \ge 1$ by:
$$
\|f'\|_{W^{k}_q(U')} := \Bigg( \|f\|^q_{L^2(U')} + \sum\limits_{0 < \|\alpha\|+\beta \le k} \Bigg\|\frac{\partial^{|\alpha|+\beta}}{\partial {\bf x}^{\alpha} \partial t^{\beta}} \Bigg\|^q_{L^2(U')}  \Bigg)^{1/q}.
$$
An important property is the $L_1$-boundedness of the cylindrical (toroidal) fundamental solution $\tilde{{\cal{E}}}({\bf x}',t)$ in the norm $\|\cdot\|_{L_1}$. To justify this we note that in view of using the particular definition of the norm $\|\cdot\|_{T_p,1}$ we obtain: 
\begin{eqnarray*}
\|\tilde{{\cal{E}}}\|_{L_1} &=& \int\limits_{U'}\|\tilde{{\cal{E}}}({\bf x}',t)\|_{T_p,1} d{\bf x}'dt \\
&=& \int\limits_{U} \min_{\omega \in \Omega_p}\|E({\bf x}+\omega,t)  \|_1 d{\bf x} dt < \infty,
\end{eqnarray*}
since the fundamental solution $E$ is an $L_1$-function over any bounded domain $U$ in $\mathbb{R}^3 \times \mathbb{R}^{+}$ according to \cite{CK2}. This allows us directly to establish
\begin{proposition}
Let $1 \le q< \infty$. Let $G'\subset T_p \times[0,\infty)$ be a bounded domain. Then the operator $\tilde{T}_{G'}$ is bounded from $L_q(G')$ to $L_q(G')$. 
\end{proposition}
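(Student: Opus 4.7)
The approach is Young's convolution inequality. The operator $\tilde T_{G'}$ has the structure of a convolution with the kernel $\tilde{\mathcal{E}}(\mathbf{x}-\mathbf{y},t-t_0;k)$, so writing for $u\in L_q(G')$
\[
\tilde T_{G'}u(\mathbf{y},t_0)=\int_{G'}\tilde{\mathcal{E}}(\mathbf{x}-\mathbf{y},t-t_0;k)\,u(\mathbf{x},t)\,dVdt,
\]
and extending $u$ by zero outside $G'$, the right-hand side becomes a spinor-valued convolution $(\tilde{\mathcal{E}}* u)(\mathbf{y},t_0)$ on $T_p\times[0,\infty)$. The plan is to estimate this by the classical Young inequality $\|\tilde{\mathcal{E}}* u\|_{L_q}\le \|\tilde{\mathcal{E}}\|_{L_1}\|u\|_{L_q}$, with both norms taken in the sense just introduced for sections.

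First I would fix a bounded open set $\tilde G\supset G'-G'$ (the difference set in $T_p\times[0,\infty)$), so that $\tilde{\mathcal{E}}(\mathbf{x}-\mathbf{y},t-t_0;k)$ only enters through its values on $\tilde G$ as $(\mathbf{y},t_0)$ ranges over $G'$. Because $G'$ is bounded in $T_p\times[0,\infty)$, the set $\tilde G$ is bounded as well; combined with the preceding computation $\|\tilde{\mathcal{E}}\|_{L_1(\tilde G)}<\infty$ (which reduced the toroidal $L_1$-norm to $\int_U \min_{\omega\in\Omega_p}\|E(\mathbf{x}+\omega,t;k)\|_1\,d\mathbf{x}dt$ and invoked the known $L_1$-property of the flat fundamental solution $E$ from \cite{CK2}), this gives a finite kernel mass $C:=\|\tilde{\mathcal{E}}\|_{L_1(\tilde G)}$.

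Next I would carry out the Young estimate componentwise on the spinor section. For $q=1$ the estimate is Fubini plus the triangle inequality applied to the pointwise quaternionic modulus, yielding $\|\tilde T_{G'}u\|_{L_1(G')}\le C\|u\|_{L_1(G')}$. For $q=\infty$ it is the immediate bound $|\tilde T_{G'}u(\mathbf{y},t_0)|\le C\|u\|_{L_\infty(G')}$. The intermediate $1<q<\infty$ then follows either by Hölder's inequality applied with exponents $q$ and $q/(q-1)$ to split $\tilde{\mathcal{E}}=\tilde{\mathcal{E}}^{1/q}\cdot\tilde{\mathcal{E}}^{1-1/q}$ inside the convolution, or, more abstractly, by Riesz–Thorin interpolation between the two endpoints.

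The main obstacle, as I see it, is purely bookkeeping rather than analytic: one must verify that the toroidal norms $\|\cdot\|_{L_q(U')}$ defined via $\min_{\omega\in\Omega_p}$ are compatible with the convolution inequality lifted from $\mathbb{R}^3\times\mathbb{R}^+$, i.e.\ that passing through the projection $\mathcal{P}$ and the signed identification defining the spinor bundle $E^{(l)}$ does not disturb the absolute-value estimates used by Young. This is handled by observing that $|\mathcal{P}(f)|=|f|$ pointwise since the bundle identification only multiplies by $\pm 1$, so the Young estimate on $\mathbb{R}^3\times\mathbb{R}^+$ pushes forward to $T_p\times[0,\infty)$ with the same constant $C$.
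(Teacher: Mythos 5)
Your proposal is correct and follows essentially the same route as the paper: the paper's proof is exactly the one-line application of Young's inequality $\|\tilde{{\cal{E}}} * g\|_{L_q} \le \|\tilde{{\cal{E}}}\|_{L_1}\cdot\|g\|_{L_q}$ combined with the previously established finiteness of $\|\tilde{{\cal{E}}}\|_{L_1}$ on bounded domains. Your additional bookkeeping (difference set, endpoint cases, compatibility of the toroidal norms with the lifted convolution estimate) only makes explicit what the paper leaves implicit.
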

\begin{proof}
In view of Young's inequality we have
$$
\|\tilde{T}_{G'} g\|_{L_q(G')} = \| \tilde{{\cal{E}}} * g\|_{L_q(G)'} \le \| \tilde{{\cal{E}}}\|_{L_1(G')} \cdot \| g \|_{L_q(G')}.
$$
Since $\|\tilde{{\cal{E}}}\|_{L_1(G')}$ is a finite expression whenever $G'$ is bounded, as shown previously, we obtain the $L_q$-boundedness of $\tilde{T}_{G'}$.
\end{proof}
As furthermore shown in \cite{CK2} also the partial derivatives of $E({\bf x},t)$ are $L_1$-bounded under the condition that $G$ is a bounded domain, we directly obtain by a similar argument the following
\begin{proposition}
Let $1 \le q< \infty$. Let $G'\subset T_p \times[0,\infty)$ be a bounded domain. Then the partial derivatives of the operator $\tilde{T}_{G'}$ with respect to $x_k$ ($k=1,2,3$) satisfy the mapping property:
$$
\partial_{x_k} (\tilde{T}_{G'} g) : L_q(G') \rightarrow L_q(G'),\; k=1,2,3
$$
and are bounded.
\end{proposition}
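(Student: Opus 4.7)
The plan is to mimic the proof of the preceding proposition, replacing the kernel $\tilde{\mathcal{E}}$ by its spatial derivative $\partial_{x_k} \tilde{\mathcal{E}}$ and invoking Young's convolution inequality a second time. The key observation is that $\tilde{T}_{G'}$ is a convolution operator, so spatial derivatives commute with the integral (in a distributional or pointwise sense away from the diagonal), and the only thing left to verify is that the differentiated kernel is again an $L_1$-function on the bounded domain $G'$.

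First I would formally differentiate under the integral sign to obtain
\[
\partial_{x_k}\bigl(\tilde{T}_{G'} g\bigr)({\bf y},t_0) \;=\; \int_{G'} \partial_{x_k}\tilde{\mathcal{E}}({\bf x}-{\bf y},t-t_0;k)\, g({\bf x},t)\, dV\, dt,
\]
which is justified because the projected kernel $\tilde{\mathcal{E}}$ inherits from $E$ the property of being smooth off the singular set $\{{\bf x}=0,\,t=0\}$, and because the $L_1$-bound on $\partial_{x_k}E$ cited from \cite{CK2} lets us apply the dominated convergence theorem to the difference quotient.

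Next, following exactly the $L_1$-estimate performed just before the first proposition, I would write
\[
\|\partial_{x_k}\tilde{\mathcal{E}}\|_{L_1(G')} \;=\; \int_{U} \min_{\omega\in\Omega_p} \bigl\|\partial_{x_k} E({\bf x}+\omega,t;k)\bigr\|_1\, d{\bf x}\, dt,
\]
where $U$ is a fundamental domain lifting $G'$. Because the authors have invoked the fact that each partial derivative $\partial_{x_k} E$ is $L_1$ on bounded subsets of $\mathbb{R}^3\times\mathbb{R}^+$ (from \cite{CK2}), and because the periodization argument used to define $\tilde{\mathcal{E}}$ converges normally on compacta by the Eisenstein-type convergence theorem proved above, the same decomposition $\Omega=\bigcup_m \Omega_m$ and the exponential decay of $E$ (which dominates the polynomial growth of $\partial_{x_k} E$ in $m$) yield finiteness of this integral.

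Finally, invoking Young's inequality for convolutions gives
\[
\|\partial_{x_k}(\tilde{T}_{G'} g)\|_{L_q(G')} \;\le\; \|\partial_{x_k}\tilde{\mathcal{E}}\|_{L_1(G')} \cdot \|g\|_{L_q(G')},
\]
so the operator $\partial_{x_k}\tilde{T}_{G'}$ is bounded from $L_q(G')$ into $L_q(G')$ for each $k=1,2,3$. The main obstacle is really the justification of differentiation under the integral together with the $L_1$-control of the periodized kernel derivative near $t=0$; once one trusts the cited estimates of \cite{CK2} on $\partial_{x_k}E$ and repeats the dominated-convergence argument used in the Eisenstein series theorem above, the rest is a direct application of Young's inequality.
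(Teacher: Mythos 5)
Your proposal is correct and follows essentially the same route as the paper: the paper's own proof is a one-line application of Young's inequality, $\|\partial_{x_k}(\tilde{T}_{G'} g)\|_{L_q(G')} \le \|\partial_{x_k}\tilde{{\cal{E}}}\|_{L_1(G')}\cdot\|g\|_{L_q(G')}$, resting on the cited $L_1$-boundedness of the derivatives of $E$ from \cite{CK2} and the periodization argument. You merely make explicit the two steps the paper leaves implicit (differentiation under the integral sign and the $L_1$-finiteness of the periodized derivative kernel), which is a faithful elaboration rather than a different approach.
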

To the proof one again only needs to apply Young's inequality leading to
$$
\|\partial_{x_k} (\tilde{T}_{G'} g)\|_{L_q(G')} = \|(\partial_{x_k} \tilde{{\cal{E}}}) * g\|_{L_q(G)'} \le \| \partial_{x_k} \tilde{{\cal{E}}}\|_{L_1(G')} \cdot \| g \|_{L_q(G')}.
$$ 
As a direct consequence of these two propositions we may now establish the important result
\begin{theorem}
Let $p\in \{1,2,3\}$, $1 \le q< \infty$ and let $k \in \mathbb{N}$. Let $G'$ be a bounded domain in the time $p$-cylinder (torus) $T_p \times[0,\infty)$. Then the operator $\tilde{T}_{G'}:L_q(G') \to W^k_q(G')$ is continuous. 
\end{theorem}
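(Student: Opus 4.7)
The approach is to reduce the $W^k_q$ estimate to a sum of $L_q$ estimates on the individual derivatives $\partial^{\alpha}_{{\bf x}}\partial^{\beta}_{t}(\tilde{T}_{G'}g)$ with $|\alpha|+\beta\le k$, each handled exactly as in the two preceding propositions. For every multi-index $(\alpha,\beta)$ with $|\alpha|+\beta\le k$, I would first argue that
$$
\partial^{\alpha}_{{\bf x}}\partial^{\beta}_{t}(\tilde{T}_{G'}g) = \bigl(\partial^{\alpha}_{{\bf x}}\partial^{\beta}_{t} \tilde{{\cal E}}\bigr) * g,
$$
by pushing derivatives onto the kernel. Young's convolution inequality then gives
$$
\bigl\|\partial^{\alpha}_{{\bf x}}\partial^{\beta}_{t}(\tilde{T}_{G'}g)\bigr\|_{L_q(G')} \le \bigl\|\partial^{\alpha}_{{\bf x}}\partial^{\beta}_{t} \tilde{{\cal E}}\bigr\|_{L_1(G')} \cdot \|g\|_{L_q(G')}.
$$
Summing these finitely many estimates over $0<|\alpha|+\beta\le k$ and adding the first proposition's $L_q$-bound for the zeroth-order term yields the desired inequality $\|\tilde{T}_{G'}g\|_{W^k_q(G')}\le C(G',k,q)\,\|g\|_{L_q(G')}$.

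The real content is to verify that $\|\partial^{\alpha}_{{\bf x}}\partial^{\beta}_{t} \tilde{{\cal E}}\|_{L_1(G')}$ is finite for every such multi-index. I would descend from $\tilde{{\cal E}}$ back to ${\cal E}$ via the covering projection ${\cal P}$ and reduce matters to bounding $\|\partial^{\alpha}_{{\bf x}}\partial^{\beta}_{t} E\|_{L_1}$ on bounded subsets of $\R^3\times \R^+$; termwise differentiation of the Eisenstein-type series is legitimate because the Gaussian factor $\exp(-k|{\bf x}+\omega|^2/4t)$ dominates any polynomial in $|\omega|$ produced by differentiation, by the same argument used in the convergence theorem for ${\cal E}$ proven earlier. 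For purely spatial derivatives, the first-order case is what is already cited from \cite{CK2}; each additional spatial derivative only adds polynomial factors in ${\bf x}$ and extra negative powers of $t$, and the parabolic rescaling ${\bf x}\mapsto \sqrt{t}\,{\bf x}$ reduces the integrand on each time slice to a fixed Schwartz-class function in ${\bf x}$ times a power of $t$.

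For the time derivatives, the cleanest route is to use the factorization $(D_{{\bf x},t,k}^{+})^{2}=-\Delta+k^{2}\partial_{t}$: away from the origin $E$ is annihilated by $D_{{\bf x},t,k}^{+}$ and hence by its square, so $\partial_{t} E=k^{-2}\Delta E$ pointwise there, and iterating lets me trade every $\partial_{t}$ for $k^{-2}\Delta$, thereby reducing all mixed derivatives to purely spatial ones already handled. The main obstacle is controlling the singular behavior at $t\to 0^{+}$ after taking higher derivatives: each differentiation of the heat-kernel factor introduces additional negative powers of $t$, but the Gaussian decay in ${\bf x}/\sqrt{t}$ provides a compensating factor of $t^{3/2}$ upon integration in ${\bf x}$, leaving an overall power of $t$ that stays integrable on any bounded parabolic cylinder as long as the derivative order $k$ is finite.
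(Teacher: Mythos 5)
Your overall strategy is the same as the paper's: the paper derives this theorem in a single line as a ``direct consequence'' of the two preceding propositions, i.e.\ by applying Young's inequality to each derivative of the convolution kernel and invoking $L_1$-bounds on the corresponding derivatives of $\tilde{{\cal E}}$, which is exactly your plan. In that sense you have reconstructed, and tried to flesh out, the intended argument.

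However, the part you add --- the verification that $\|\partial^{\alpha}_{{\bf x}}\partial^{\beta}_{t}\tilde{{\cal E}}\|_{L_1(G')}<\infty$ for \emph{all} $|\alpha|+\beta\le k$ --- contains a genuine error, and it is precisely the point where the theorem needs real work. The parabolic rescaling ${\bf x}\mapsto\sqrt{t}\,{\bf y}$ gives a \emph{fixed} compensating factor $t^{3/2}$ from the volume element, independent of how many derivatives you take, while each spatial derivative of the heat-kernel factor costs $t^{-1/2}$ and each time derivative costs $t^{-1}$ in the slice-wise $L_1(d{\bf x})$ norm: for $\Phi({\bf x},t)=t^{-3/2}\exp(-k|{\bf x}|^2/4t)$ one finds $\|\partial^{\alpha}_{{\bf x}}\partial^{\beta}_{t}\Phi(\cdot,t)\|_{L_1(d{\bf x})}\sim t^{-(|\alpha|+2\beta)/2}$, which is integrable near $t=0$ only when $|\alpha|+2\beta\le 1$. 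Since $E$ itself already carries components of parabolic order one and two (the $\frac{k}{2t}\sum_j e_jx_j$ and $\frac{k|{\bf x}|^2}{4t^2}$ terms), higher-order derivatives of the kernel fail to be locally integrable near the space-time singularity, so your concluding claim that ``the overall power of $t$ stays integrable as long as the derivative order is finite'' is false once the total order reaches two (and already for a single time derivative, which has parabolic order two). At that stage $\partial^{\alpha}_{{\bf x}}\partial^{\beta}_{t}\tilde T_{G'}$ is a singular integral operator of Calder\'on--Zygmund type, which is not controlled by Young's inequality (and is in general not even bounded for $q=1$). A correct proof would have to either restrict to the derivative orders actually covered by the $L_1$-bounds cited from \cite{CK2}, or replace Young's inequality by parabolic singular-integral estimates. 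Your reduction $\partial_t E=k^{-2}\Delta E$ away from the singularity is correct and consistent with this bookkeeping, but it does not repair the integrability. (To be fair, the paper itself offers no argument beyond the two propositions, which only cover order zero and first spatial derivatives, so the gap is inherited from the source; but the way you propose to close it does not work.)
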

This property together with the Borel-Pompeiu formula presented in Section~2 also implies that the operator
$$
\tilde{F}_{\Gamma}:W^{k-1/q}_q(\Gamma) \to W^{k}_q(G')
$$
is continuous. 

To complete the quaternionic integral calculus toolkit, the associated Bergman projection can be introduced by 
$$
\tilde{{\bf P}} = \tilde{F}_{\Gamma}(tr_{\Gamma} \tilde{T}_G \tilde{F}_{\Gamma})^{-1} tr_{\Gamma}
\tilde{T}_G.
$$
and $\tilde{{\bf Q}} := \tilde{{\bf I}} - \tilde{{\bf P}}$. 
\par\medskip\par
Now, adapting from \cite{CK2009} we obtain a direct analogy of Theorem~1, Lemma~1 and Lemma~2 on these conformally flat time cylinders rep. time tori using these time cylindrical (toroidal) versions $\tilde{T}_G, \tilde{F}_{\Gamma}$ and $\tilde{{\bf P}}$ of operators introduced in Section~2. 
Suppose next that we have to solve an MHD problem of the form (1)-(5) within a Lipschitz domain $G \subset T_3 \times \R^+$ with values in the spinor bundle $E^{(l)} \times \R^+$. Then, imposing certain regularity conditions, which will be discussed in very detail in our future work, we can compute its solutions by simply applying the following adapted iterative algorithm 
\begin{eqnarray*}
{\bf u}_n &=& \frac{Re}{\mu_0} \tilde{T}_G \tilde{{\bf Q}} \tilde{T}_G \Big[\Vec(({\tilde{D}}{\bf B}_{n-1})\cdot {\bf B}_{n-1})-\Re({\bf u}_{n-1}{\tilde {D}}){\bf u}_{n-1}\Big]\\ & & - Re^2 \tilde{T}_G \tilde{{\bf Q}}\tilde{T}_G {\tilde{D}} p_n\\
\Re(\tilde{{\bf Q}}\tilde{T}_G {\tilde{D}} p_n)&=&\frac{1}{\mu_0}\Re\Big[\tilde{{\bf Q}} \tilde{T}_G \Vec(({\tilde{D}}{\bf B}_{n-1})\cdot {\bf B}_{n-1}) - \Re({\bf u}_{n-1}{\tilde{D}}){\bf u}_{n-1}\Big]\\
{\bf B}_n &=& Rm^2 \tilde{T}_G \tilde{{\bf Q}} \tilde{T}_G \Big[\Re({\bf B}_n{\tilde{D}}){\bf u}_n - \Re({\bf u}_n{\tilde{D}}){\bf B}_n \Big].\\
{{\bf B}_n}^{(i)} &=& Rm^2 \tilde{T}_G \tilde{{\bf Q}} \tilde{T}_G \Big[\Re({{\bf B}_n}^{(i-1)}{\tilde{D}}){\bf u}_n - \Re({\bf u}_n{\tilde{D}}){{\bf B}_n}^{(i-1)} \Big]
\end{eqnarray*}
%In the same flavor one obtains a direct analogy of Theorems~??-?? from \cite{CKK2018} in this context. 
\par\medskip\par
Again, in our future work, we will address a number of concrete existence and uniqueness criteria for the solutions computed by this fixed point algorithm involving some a priori estimate conditions. 

\par\medskip\par
Anyway, it is now clear how this approach even carries over to more general conformally flat spin manifolds that arise by factoring out a simply connected domain $U$ by a discrete Kleinian group $\Gamma$. The Cauchy-kernel is constructed by the projection of the $\Gamma$-periodization (involving eventually automorphy factors like in \cite{BCKR}) of the fundamental solution $E({\bf x};t;k)$. With this fundamental solution we construct the corresponding integral operators on the manifold. In terms of these integral operators we can express the solutions of the corresponding MHD boundary value problem on these manifolds, simply by replacing the usual hypercomplex integral operators by its adequate analogies on the manifold. In this framework, of course one has to introduce the adequated norms and to consider the adequated function spaces accordingly.  

This again underlines the highly universal character of our approach to treat the MHD equations but also many other complicated elliptic, parabolic, hypoelliptic and hyperbolic PDE systems with the quaternionic operator calculus using Dirac operators. Furthermore, the representation formulas and results also carry directly over to the $n$-dimensional case in which one simply replaces the corresponding quaternionic operators by Clifford algebra valued operators, such as suggested in \cite{CK2,CK2009}. 
\par\medskip\par
To round off we establish a further result on the invariance behavior of the kernel functions  under rotations of $S^3$ applied to the spatial coordinates. More precisely, we have:
\begin{theorem} Let $a \in S^{3} := \{{\bf x} \in \mathbb{R}^3 \mid \|{\bf x}\|=1\}$. Then the Cauchy kernel of the parabolic Dirac operator satisfies the invariance property $\overline{a} E(a {\bf x} \overline{a},t;k) a = E({\bf x},t;k)$ for all $a \in S^3$. 
\end{theorem}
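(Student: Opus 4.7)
The plan is to establish the identity by decomposing $E(\mathbf{x},t;k)$ into a scalar prefactor, a spatial-vector summand, and the two parabolic-basis summands, and then checking that each piece is preserved by the combined substitution $\mathbf{x}\mapsto a\mathbf{x}\overline{a}$ and the outer sandwich $\overline{a}(\,\cdot\,)a$.

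First I would isolate the real-valued prefactor $\phi(\mathbf{x},t) := \sqrt{k}\,H(t)\exp(-k|\mathbf{x}|^2/(4t))/(2\sqrt{\pi t})^3$. Because $a$ satisfies $\overline{a}a = |a|^2 = 1$ and quaternion multiplication is norm-multiplicative, the map $\mathbf{x}\mapsto a\mathbf{x}\overline{a}$ is an isometry of $\mathbb{R}^3$; in particular $|a\mathbf{x}\overline{a}| = |\mathbf{x}|$, so $\phi$ is unchanged by the spatial substitution. Being scalar, $\phi$ also commutes with both $a$ and $\overline{a}$ and thus factors out of the outer sandwich.

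Next I would treat the three summands in the bracket. The vector summand $\frac{k}{2t}\sum_{j=1}^3 e_j x_j$, identified with the pure quaternion $\frac{k}{2t}\mathbf{x}$, becomes $\frac{k}{2t}\,a\mathbf{x}\overline{a}$ after the substitution, and the outer sandwich telescopes via $\overline{a}a=1$:
\[
\overline{a}\Bigl(\frac{k}{2t}\,a\mathbf{x}\overline{a}\Bigr)a \;=\; \frac{k}{2t}(\overline{a}a)\,\mathbf{x}\,(\overline{a}a) \;=\; \frac{k}{2t}\,\mathbf{x}.
\]
The summand $\mathfrak{f}\bigl(\frac{3}{2t}+\frac{k|\mathbf{x}|^2}{4t^2}\bigr)$ has a scalar coefficient depending only on $|\mathbf{x}|$ and $t$, which is again invariant under the substitution; what is left to verify is $\overline{a}\mathfrak{f}a = \mathfrak{f}$. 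The same reduction applies to the constant term $k\mathfrak{f}^{\dagger}$, reducing its analysis to the identity $\overline{a}\mathfrak{f}^{\dagger}a = \mathfrak{f}^{\dagger}$. Both identities should be checked using the interaction rules for $\mathfrak{f},\mathfrak{f}^{\dagger}$ with the spatial basis vectors $e_j$ from Section~2, combined with $\overline{a}a = 1$.

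Assembling the three preserved pieces and restoring the invariant prefactor yields the claim. The main obstacle I foresee is precisely the last pair of identities: the rules governing how $\mathfrak{f}$ and $\mathfrak{f}^{\dagger}$ interact with the quaternionic units must be applied carefully in order to push the unit element $a$ past the parabolic generators so that the sandwich returns $\mathfrak{f}$ and $\mathfrak{f}^{\dagger}$ unchanged. Once $\overline{a}\mathfrak{f}a = \mathfrak{f}$ and $\overline{a}\mathfrak{f}^{\dagger}a = \mathfrak{f}^{\dagger}$ are secured, the remainder of the verification is routine algebraic bookkeeping.
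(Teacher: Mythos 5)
Your plan coincides with the paper's own proof: the authors also pull the scalar prefactor through using $|a\mathbf{x}\overline{a}|=|\mathbf{x}|$, telescope the vector summand via $\overline{a}a=1$, and reduce the remaining two summands to the identities $\overline{a}\mathfrak{f}a=\mathfrak{f}$ and $\overline{a}\mathfrak{f}^{\dagger}a=\mathfrak{f}^{\dagger}$. So in structure there is nothing different between the two arguments.

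The one point you defer --- and correctly flag as the main obstacle --- is also the one point the paper does not discharge: it simply cites $\overline{a}\mathfrak{f}a=\mathfrak{f}$ and $\overline{a}\mathfrak{f}^{\dagger}a=\mathfrak{f}^{\dagger}$ as ``properties'' without derivation. Be aware that these do \emph{not} follow from the multiplication rules of Section~2 as stated there. Those rules say $\mathfrak{f}e_j=e_j\mathfrak{f}=0$, so for $a=a_0+\mathbf{a}$ one computes $\mathfrak{f}a=a_0\mathfrak{f}$ and hence $\overline{a}\mathfrak{f}a=a_0^2\mathfrak{f}$ (similarly for $\mathfrak{f}^{\dagger}$); this equals $\mathfrak{f}$ only for $a=\pm1$, and vanishes outright if $a$ is a pure vector, which is what the theorem's literal definition $S^3=\{\mathbf{x}\in\mathbb{R}^3\mid\|\mathbf{x}\|=1\}$ would give. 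So the sandwich identities must be taken as an additional convention (the rotation is declared to act trivially on the Witt generators), not as ``routine algebraic bookkeeping'' derivable from the stated relations. Your proposal is faithful to the paper's argument, but the step you postpone is a genuine assumption, not a verification you could have completed from the preliminaries.
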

\begin{proof}
Let us consider the expression:
\begin{eqnarray*}
\overline{a} E(a {\bf x} \overline{a},t;k) a &=& \overline{a} \Bigg(  
\frac{H(t) \exp(-\frac{|a{\bf x}\overline{a}|^2}{4t})}
{(2 \sqrt{\pi t})^3}
\Big(\frac{1}{2t} a{\bf x}\overline{a} + \mathfrak{f}(\frac{3}{2t} 
+ \frac{|a{\bf x}\overline{a}|^2}{4t^2})+ \mathfrak{f}^{\dagger}\Big)   
\Bigg) a\\
&=&   
\frac{H(t) \exp(-\frac{|{\bf x}|^2}{4t})}
{(2 \sqrt{\pi t})^3}
\Big(\frac{1}{2t} \overline{a}a{\bf x}\overline{a}a + \overline{a}\mathfrak{f}(\frac{3}{2t} 
+ \frac{|{\bf x}|^2}{4t^2})a+ \overline{a}\mathfrak{f}^{\dagger}a\Big) \\
&=& E({\bf x},t;k)
\end{eqnarray*}
where we applied the properties that $a \overline{a} = \|a\|^2=1$, $\overline{a} \mathfrak{f} a = \mathfrak{f}$ and 
$\overline{a} \mathfrak{f}^{\dagger} a = \mathfrak{f}^{\dagger}$.
\end{proof}
This property opens the door to treat a class of $S^3$-invariant manifolds. More precisely,   
by identifying all points of the form $(a {\bf x} \overline{a},t)$ with $({\bf x},t)$ we can construct a class of rotation invariant projective orbifolds which under certain constraints on ${\bf a}$ will be manifolds again. 

\par\medskip\par

Notice also the cylindrical and toroidal kernels ${\cal{E}}({\bf x}',t)$ exhibit this rotation invariance behavior. This is due to the fact that each single term in the series itself exhibits this rotation invariance property, so that the whole series turn out to have this property.  
\par\medskip\par
Moreover, this new identification can additionally be combined with the cylindrical (toroidal) translation invariance where one applies the identification of all $\Omega_p$-equivalent points. This gives rise to an identification of all points of the time cylinder (torus) $(a {\bf x}' \overline{a},t)$ with $({\bf x}',t)$. The associated orbifold resulting from this identification that has both a translation and a rotation invariant structure. In some dimensions we even obtain manifolds.

In the case where we restrict to those points from the unit sphere $a \in S^3$ such that there is a finite number $n \in \mathbb{N}$ with $a^n = 1$ which yields a finite cyclic group of rotations ${\cal{A}}:=\{a,a^2,\ldots,a^n\}$, then the corresponding Cauchy kernel can again be constructed by an Eisenstein type series. The latter then has the explicit form 
$$ 
{\cal{E}}_{\cal{A}}({\bf x},t;k) = \sum\limits_{a \in {\cal{A}}} \sum\limits_{\omega \in \Z^p\oplus \Z^{p-l}}(-1)^{m_1+\cdots+m_l} \overline{a}E(a{\bf x}\overline{a}+\omega,t;k)a
$$   
which then descends to a projective rotational variant of the cylinders / tori discussed previously. Since ${\cal{A}}$ only has a finite cardinality, the convergence of this series is guaranteed by the argument of Theorem 3. 

Once one has that the kernel function, one again can introduce the corresponding Teodorescu and Cauchy Bitzadse operators involving these explicit kernels in the same way as performed previously to also address the corresponding boundary value problems in these kinds of geometries introducing the norms properly. This once more underlines the geometric universality of our approach where we do nothing else than exploiting the conformal invariance of the Dirac operator. 

\section{Acknowledgements}  

The work of the third  author is supported by the project \textit{Neue funktionentheoretische Methoden f\"ur instation\"are PDE}, funded by Programme for Cooperation in Science between Portugal and Germany, DAAD-PPP Deutschland-Portugal, Ref: 57340281. The work of the first and second authors is supported via the project \textit{New Function Theoretical Methods in Computational Electrodynamics} approved under the agreement A\c c\~oes Integradas Luso-Alem\~as DAAD-CRUP,  ref. A-15/17, and by Portuguese funds through the CIDMA - Center for Research and Development in Mathematics and Applications, and the Portuguese Foundation for Science and Technology (``FCT--Funda\c{c}\~ao para a Ci\^encia e a Tecnologia''),  within project UID/MAT/0416/2013.

\end{document}